\theoremstyle{definition}
\newtheorem{thm}{Theorem}[section]
\newtheorem{lem}[thm]{Lemma}
\newtheorem{prop}[thm]{Proposition}
\newtheorem{note}[thm]{Notation}
\newtheorem{para}[thm]{}
\DeclareMathOperator{\codim}{\mathrm{codim}}
\DeclareMathOperator{\NL}{\mathrm{NL}}
\DeclareMathOperator{\prim}{\mathrm{prim}}
\DeclareMathOperator{\red}{\mathrm{red}}
\DeclareMathOperator{\Ima}{\mathrm{Im}}
\DeclareMathOperator{\p3}{\mathbb{P}^3}
\DeclareMathOperator{\pn}{\mathbb{P}^{2n+1}}
\DeclareMathOperator{\pr}{\mathrm{pr}}
\DeclareMathOperator{\mo}{\mathcal{O}}
\DeclareMathOperator{\NS}{\mathrm{NS}}
\newcommand{\ov}[1]{\overline{#1}}
\newcommand{\mb}[1]{\mathbb{#1}}
\newcommand{\mc}[1]{\mathcal{#1}}
\newcommand{\mr}[1]{\mathrm{#1}}
\title{Noether-Lefschetz locus and a special case of the variational Hodge conjecture: Using elementary techniques}
\author{Ananyo Dan \thanks{The author has been supported by the DFG under Grant 
KL-$2244/2-1$\\ \newline Humboldt Universit\"{a}t zu Berlin, Institut f\"{u}r Mathematik, Unter den Linden $6$, Berlin $10099$.\\ e-mail: dan@mathematik.hu-berlin.de\\
Mathematics Subject Classification: $14$C$30$, $14$D$07$}}
\date{\today}
\begin{document}
\maketitle
\doublespacing

\begin{abstract}
 Fix integers $n \ge 1$ and $d$ such that $nd>2n+2$. The Noether-Lefschetz locus $\NL_{d,n}$ parametrizes smooth projective hypersurfaces in $\pn$ such that $H^{n,n}(X,\mb{C}) \bigcap H^{2n}(X,\mb{Q})\not=\mb{Q}$.
 An irreducible component of the Noether-Lefschetz locus is locally a Hodge locus. One question is to ask under what choice of a Hodge class $\gamma \in H^{n,n}(X,\mb{C}) \bigcap H^{2n}(X,\mb{Q})$ does 
 the variational Hodge conjecture hold true? In this article we use methods coming from commutative algebra and Hodge theory to give an affirmative answer in the case $\gamma$ is the 
 class of a complete intersection subscheme in $X$ of codimension $n$. 
 Another problem studied in this article is: In the case $n=1$ when is an irreducible component of the Noether-Lefschetz locus nonreduced?
 Using the theory of infinitesimal variation of Hodge
 structures of hypersurfaces in $\p3$, we determine all non-reduced components with codimension less than or equal to $3d$ for $d \gg 0$. Here again our primary tool is commutative algebra.
\end{abstract}
\begin{note}
Throughout this article, $X$ will denote a smooth hypersurface in $\pn$.
 Denote by $H^{n,n}(X,\mb{Q})$ the intersection $H^{n,n}(X,\mb{C}) \bigcap H^{2n}(X,\mb{Q})$ and $H_X$ the very ample line bundle on $X$.
\end{note}

\section{Introduction}
It was first stated by M. Noether and later proved by S. Lefschetz that for a general smooth surface $X$ in $\p3$, the rank of the N\'{e}ron-Severi group, denoted $\NS(X)$ is of rank $1$. 
We can then define the \emph{Noether-Lefschetz locus}, denoted $\NL_{d,1}$, to be the space
of smooth degree $d$ surfaces in $\p3$ with Picard rank greater than $1$. Using Lefschetz $(1,1)$-theorem, one can see that $\NL_{d,1}$ is the space 
of smooth degree $d$ surfaces $X$ such that $H^{1,1}(X,\mb{Q})\not=\mb{Q}$. Similarly, we can define \emph{higher Noether-Lefschetz locus} as follows: Let $n>1$ and $d$ another integer such that $nd>2n+2$. Denote by 
$\NL_{d,n}$ the space of smooth degree $d$ hypersurfaces $X$ in $\pn$ such that $H^{n,n}(X,\mb{Q}) \not=\mb{Q}$. The orbit of the action of the monodromy group on a rational class is finite (see \cite{kap}). Consequently,
$\NL_{d,n}$ is an uncountable union of algebraic varieties (see \cite[\S $3.3$]{v5} for more details). 

Let $L$ be an irreducible component of $\NL_{d,n}$. Then $L$ can be locally studied as the Hodge locus corresponding to a Hodge class. In particular, take $X \in L$, general and consider the space of 
all smooth degree $d$ hypersurfaces in $\pn$, denoted $U_{d,n}$. For $X \in L$, general, there exists $\gamma \in H^{n,n}(X,\mb{Q})$ and an open (analytic) simply connected set $U$ in $U_{d,n}$ containing $X$ such that $L \bigcap U$ 
is the Hodge locus corresponding to $\gamma$, 
denoted $\NL_{d,n}(\gamma)$ (see \cite[\S $5.3$]{v4} for more details). Before we state the first main result in this article, we fix some notations. 
Given a Hilbert polynomial P, of a subscheme $Z$, in $\pn$, denote by $H_P$ the corresponding Hilbert scheme.
 Denote by $Q_d$ the Hilbert polynomial of a degree $d$ hypersurface in $\pn$. The flag Hilbert scheme $H_{P,Q_d}$
 parametrizes all pairs $(Z,X)$, where $Z \in H_P, X \mbox{ is a smooth degree } d \mbox{ hypersurface in } \pn$ containing $Z$.
For any $n \ge 1$ we prove the following theorem which is a special case of the variational Hodge conjecture:
\begin{thm}\label{ele2}
Let $Z$ be a complete intersection subscheme in $\pn$ of codimension $n+1$. Assume that there exists a smooth hypersurface in $\pn$, say $X$, containing $Z$, of degree
$d>\deg(Z)$. For the cohomology class $\gamma=a[Z] \in H^{n,n}(X,\mb{Q}), a \in \mb{Q}$,
$\gamma$ remains of type $(n,n)$ if and only if $\gamma$ remains an algebraic cycle. In particular, $\ov{\NL_{d,n}(\gamma)}$ (closure taken in $U_{d,n}$) is isomorphic to an irreducible component of 
$\pr_2H_{P,Q_d}$ which parametrizes all degree $d$ hypersurfaces in $\pn$ containing a complete intersection subcheme with Hilbert polynomial $P$, 
where $P$ (resp. $Q_d$) is the Hilbert polynomial of $Z$ (resp. $X$).
 \end{thm}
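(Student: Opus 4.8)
The plan is to prove that, for $X$ general in the component, the germ of $\NL_{d,n}(\gamma)$ at $X$ coincides with the germ of $W:=\pr_2 H_{P,Q_d}$, the locus of smooth degree $d$ hypersurfaces containing a complete intersection with Hilbert polynomial $P$, and then to globalise by taking Zariski closures. Write $Z=V(g_1,\dots,g_{n+1})$ with $\deg g_i=e_i$, so $\deg Z=\prod_ie_i<d$; as $Z\subset X=V(f)$ we may write $f=\sum_ia_ig_i$ with $\deg a_i=d-e_i$. The inclusion $W\subseteq\NL_{d,n}(\gamma)$, hence the implication ``$\gamma$ algebraic $\Rightarrow$ $\gamma$ of type $(n,n)$'', is immediate: over the (connected) flag Hilbert scheme there is a flat family of pairs $(Z',X')$, the cycle class $[Z']\in H^{2n}(X',\mb{Q})$ is a locally constant section of the associated variation of Hodge structure equal to $\gamma/a$ at $X$, so $\gamma$ stays of type $(n,n)$ along $W$. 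The substance is the reverse inclusion, which I will obtain from a dimension count.

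Write $S=\mb{C}[x_0,\dots,x_{2n+1}]$ and let $R=S/J_f$ be the Jacobian ring of $X$, an Artinian Gorenstein standard graded ring with socle in degree $N=(2n+2)(d-2)$; set $\sigma=(n+1)(d-2)$, so $N=2\sigma$ and $\sigma>d$ (because $nd>2n+2$). By the infinitesimal theory of Hodge loci of hypersurfaces (Carlson--Griffiths, Voisin), if $\gamma_0\in R_\sigma\cong H^{n,n}_{\prim}(X,\mb{C})$ is a polynomial representative of the primitive part of $\gamma$ --- nonzero by the hypothesis $d>\deg Z$, via a self-intersection computation showing $[Z]$ is not proportional to $H_X^n$ --- then $T_X\NL_{d,n}(\gamma)$ is the preimage in $S_d/\mb{C}f\cong T_XU_{d,n}$ of $\ker\bigl(\gamma_0\cdot(-)\colon R_d\to R_{\sigma+d}\bigr)$, so $\dim T_X\NL_{d,n}(\gamma)=\dim_{\mb{C}}S_d-1-\dim_{\mb{C}}(\gamma_0\cdot R_d)$. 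For $W$: the flag Hilbert scheme $H_{P,Q_d}$ is smooth at $(Z,X)$, since it fibres over the smooth irreducible Hilbert scheme of complete intersections of type $(e_1,\dots,e_{n+1})$ with fibre an open subset of $\mb{P}\bigl(H^0(\mc{I}_Z(d))\bigr)$ of locally constant dimension (complete intersections are arithmetically Cohen--Macaulay, and $d>\deg Z$ makes the relevant higher cohomology vanish); and the image of $d\pr_2$ at $(Z,X)$ is $\mathfrak{b}_d/\mb{C}f$, where $\mathfrak{b}:=(g_1,\dots,g_{n+1},a_1,\dots,a_{n+1})$ --- this image consists of the first-order deformations $\dot f$ of $X$ whose restriction to $Z$ lies in the image of $H^0(N_{Z/\pn})=\bigoplus_iH^0(\mc{O}_Z(e_i))\to H^0(\mc{O}_Z(d))$, and that image, pulled back to $S_d$, equals $\sum_ia_iS_{e_i}+(\mc{I}_Z)_d=\mathfrak{b}_d$. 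By generic smoothness, $\dim_XW=\dim_{\mb{C}}\mathfrak{b}_d-1$. Carrying out this flag-scheme computation carefully is the first delicate point.

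It therefore suffices to prove the identity $\dim_{\mb{C}}(\gamma_0\cdot R_d)=\dim_{\mb{C}}(S_d/\mathfrak{b}_d)$ --- this commutative-algebra step is the heart of the argument. First, $J_f\subseteq\mathfrak{b}$, because $\partial_jf=\sum_i(\partial_ja_i)g_i+\sum_ia_i(\partial_jg_i)$, so $(R/\mathfrak{b}R)_d=S_d/\mathfrak{b}_d$ and it is enough to show $\mathrm{Ann}_R(\gamma_0)=\mathfrak{b}R$. The inclusion $W\subseteq\NL_{d,n}(\gamma)$ gives $T_XW=\mathfrak{b}_d/\mb{C}f\subseteq T_X\NL_{d,n}(\gamma)$, i.e.\ $\mathfrak{b}_d\cdot\gamma_0=0$ in $R$. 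I then propagate this to all degrees: from $g_iS_{d-e_i}\gamma_0=0$ we get $(g_i\gamma_0)\cdot R_{d-e_i}=0$, and since $R$ is standard graded with $d-e_i\geq0$ and $\sigma-d\geq0$ we have $R_{\sigma-e_i}=R_{d-e_i}\cdot R_{\sigma-d}$, whence $(g_i\gamma_0)\cdot R_{\sigma-e_i}=0$ and so $g_i\gamma_0=0$ by the perfect Gorenstein pairing $R_{e_i+\sigma}\times R_{\sigma-e_i}\to R_N$; the same argument yields $a_i\gamma_0=0$, hence $\mathfrak{b}R\subseteq\mathrm{Ann}_R(\gamma_0)$. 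For the opposite inclusion, note that since $X$ is smooth, $V(\mathfrak{b})=Z\cap V(a_1,\dots,a_{n+1})=\varnothing$ (a common zero of the $g_i$ and $a_i$ would be a singular point of $X$), so $\mathfrak{b}$ is an Artinian complete intersection and $S/\mathfrak{b}$ is Gorenstein of socle degree $\sum_i(e_i-1)+\sum_i(d-e_i-1)=(n+1)(d-2)=\sigma=N-\sigma$. By the duality/linkage theory of Artinian Gorenstein rings, $0:_R\mathfrak{b}R$ is then a cyclic $R$-module, generated in degree $\sigma$, with $\mathrm{Ann}_R(0:_R\mathfrak{b}R)=\mathfrak{b}R$. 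Since $0\neq\gamma_0\in(0:_R\mathfrak{b}R)_\sigma$ and $\dim_{\mb{C}}(0:_R\mathfrak{b}R)_\sigma=1$, the element $\gamma_0$ generates $0:_R\mathfrak{b}R$; hence $\mathrm{Ann}_R(\gamma_0)=\mathfrak{b}R$ and $\gamma_0\cdot R_d$ has dimension $\dim_{\mb{C}}(R/\mathfrak{b}R)_d=\dim_{\mb{C}}(S_d/\mathfrak{b}_d)$, as wanted.

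Combining, $\dim_XW=\dim_{\mb{C}}\mathfrak{b}_d-1=\dim_{\mb{C}}S_d-1-\dim_{\mb{C}}(\gamma_0\cdot R_d)=\dim T_X\NL_{d,n}(\gamma)$; together with $W\subseteq\NL_{d,n}(\gamma)$ this forces $\dim_XW\leq\dim_X\NL_{d,n}(\gamma)\leq\dim T_X\NL_{d,n}(\gamma)=\dim_XW$. Hence these are all equal, $\NL_{d,n}(\gamma)$ is smooth (in particular reduced) at $X$, and its germ is that of $W$. Finally, since $\ov{W}$ and $\ov{\NL_{d,n}(\gamma)}$ are irreducible with $\ov{W}\subseteq\ov{\NL_{d,n}(\gamma)}$ and coincide near the general point $X$, we get $\ov{\NL_{d,n}(\gamma)}=\ov{W}$, which is the asserted identification; and because $W$ parametrises exactly the hypersurfaces through a flat deformation $Z'$ of $Z$, with $\gamma=a[Z']$ there, any deformation of $X$ keeping $\gamma$ of type $(n,n)$ keeps it algebraic. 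The main obstacle I foresee is precisely the pair of ingredients flagged above --- the flag-Hilbert-scheme tangent-space computation identifying $\Ima(d\pr_2)$ with $\mathfrak{b}_d/\mb{C}f$, and the commutative-algebra lemma $\mathrm{Ann}_R(\gamma_0)=\mathfrak{b}R$ (especially the degree-propagation step and the smoothness input $V(\mathfrak{b})=\varnothing$); everything else is bookkeeping with standard facts about Jacobian rings, flag Hilbert schemes, and Artinian Gorenstein quotients.
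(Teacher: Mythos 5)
Your proposal is correct, and its overall architecture coincides with the paper's: both arguments reduce the theorem to showing that $T_X\NL_{d,n}(\gamma)$ equals the degree-$d$ piece of the ideal $\mathfrak{b}=(g_1,\dots,g_{n+1},a_1,\dots,a_{n+1})$ (the paper's $I=(P_0,\dots,P_n,Q_0,\dots,Q_n)$, with your $\mathrm{Ann}_R(\gamma_0)$ playing the role of the paper's $T_1$), via the observation that $S/\mathfrak{b}$ is an Artinian complete intersection whose socle degree matches that forced on $S/\mathrm{Ann}_R(\gamma_0)$ by Macaulay's theorem, and then to a dimension comparison with $\pr_2H_{P,Q_d}$. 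The genuine divergence is in how the key inclusion $\mathfrak{b}\subseteq\mathrm{Ann}_R(\gamma_0)$ is obtained: the paper gets it by a linkage manipulation of cycle classes combined with the explicit Griffiths--Harris formula \cite[$4.a.4$]{GH} for the residue representative of the class of a complete intersection, whereas you derive only the degree-$d$ identity $\mathfrak{b}_d\cdot\gamma_0=0$ from the geometric containment $\Ima(d\pr_2)\subseteq T_X\NL_{d,n}(\gamma)$ and then propagate it to the generators $g_i,a_i$ themselves using the Gorenstein pairing and the standard-gradedness of $R$ (the inequalities $d-e_i\ge 0$ and $\sigma-d=nd-2n-2>0$ that make this work do hold under the hypotheses). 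This buys you a more self-contained argument that never computes the class $[Z]$ in the Jacobian ring; the cost is the extra flag-Hilbert-scheme tangent computation identifying $\Ima(d\pr_2)$ with $\mathfrak{b}_d/\mb{C}f$, which replaces the paper's (sketchier) map $p:T_{1,d}\to H_{Q_d}$ in the final count and is arguably the cleaner route. Two further points in your favour: you explicitly verify $\gamma_{\prim}\neq 0$ from $d>\deg(Z)$ via the self-intersection computation (the paper needs this for $S/T_1$ to be Gorenstein of socle degree $N$ but leaves it implicit), and your double-annihilator formulation of the last step is equivalent to the paper's ``two Gorenstein ideals with the same socle degree, one contained in the other, coincide.''
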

In \cite{A1}, Otwinowska proves this statement for $d \gg 0$. Furthermore, in the case $n=1$,  we prove:
\begin{thm}\label{ele1}
 Let $d \ge 5$ and $\gamma$ is a divisor in a smooth degree $d$ surface of the form $\sum_{i=1}^r a_i[C_i]$ with $C_i$ distinct integral curves 
 for all $i=1,...,r$  and $d>\sum_{i=1}^r a_i\deg(C_i)+4$. Then the following are true:
 \begin{enumerate}[(i)]
  \item If $r=1$ and $\deg(C_1)<4$ then $\overline{\NL_{d,1}(\gamma)}$ (closure taken under Zariski topology on $U_{d,1}$) 
  is reduced. In particular, $\ov{\NL_{d,1}(\gamma)}$ is an irreducible component of $\pr_2(H_{P,Q_d})$, the space parametrizing all degree $d$ surfaces containing a 
  reduced curve with the same Hilbert polynomial as $C_1$, which we denote by $P$.
  \item Suppose that $r>1$. For $d \gg 0$, every irreducible component $L$ of $\NL_{d,1}$ of codimension at most $3d$ is locally of the form $\NL_{d,1}(\gamma)$ with $\gamma$
  as above, $\deg(C_i) \le 3$ and $\overline{\NL_{d,1}(\gamma)}_{\red}= \bigcap_{i=1}^r \overline{\NL_{d,1}([C_i])}_{\red}$. 
  Moreover, $\ov{\NL_{d,1}(\gamma)}$ is non-reduced if and only if there exists a pair $(i,j)$, $i \not=j$ such that $C_i.C_j \not=0$.
 \end{enumerate}
\end{thm}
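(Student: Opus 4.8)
The plan is to combine the infinitesimal theory of variations of Hodge structure for hypersurfaces with explicit commutative algebra in the Jacobian ring. For $X=V(F)\subseteq\mathbb{P}^3$ smooth of degree $d$, write $R=R(F)$ for the Jacobian ring; it is Artinian Gorenstein of socle degree $\sigma=4d-8$, and the theory of IVHS of hypersurfaces gives canonical identifications $H^1(X,\mathcal{T}_X)\cong R_d$, $H^{1,1}_{\prim}(X)\cong R_{2d-4}$ and $H^{0,2}(X)\cong R_{3d-4}\cong R_{d-4}^{\vee}$, under which the $(0,2)$-part of the infinitesimal invariant of a primitive Hodge class $\bar\gamma\in R_{2d-4}$ is multiplication by $\bar\gamma$. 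Hence the Zariski tangent space at $X$ is the preimage in $H^0(X,\mathcal{O}_X(d))$ of $\ker(\bar\gamma\cdot{-}\colon R_d\to R_{3d-4})=\mathrm{Ann}_R(\bar\gamma)_d$, so $\dim T_X\overline{\NL_{d,1}(\gamma)}=\dim H^0(X,\mathcal{O}_X(d))-\mathrm{rank}(\bar\gamma\cdot{-})$. The second, more geometric, input I would use is the semiregularity factorisation: for an integral curve $C\subseteq X$, multiplication by $[C]_{\prim}$ on $R_d$ equals $\pi_C\circ\mathrm{ob}_C$, where $\mathrm{ob}_C\colon H^1(X,\mathcal{T}_X)\to H^1(C,\mathcal{N}_{C/X})$ is the obstruction to following a deformation of $X$ with $C$ and $\pi_C$ is Bloch's semiregularity map; by Serre duality on $C$ and $X$ together with adjunction $\pi_C$ is the transpose of the restriction $H^0(X,\mathcal{O}_X(d-4))\to H^0(C,\mathcal{O}_C(d-4))$, so in particular $\Ima([C]_{\prim}\cdot{-})=H^0(X,\mathcal{I}_{C/X}(d-4))^{\perp}$ as soon as $d$ is large enough that this restriction is surjective and $H^1(C,\mathcal{N}_{C/\mathbb{P}^3})=0$ (equivalently $\mathrm{ob}_C$ is surjective).

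For part (i): since $C_1$ is a plane curve it is a complete intersection, so $\NL_{d,1}(a[C_1])=\NL_{d,1}([C_1])$ and Theorem~\ref{ele2} already identifies $\overline{\NL_{d,1}(\gamma)}$ with a component of $\pr_2(H_{P,Q_d})$; what remains is reducedness. I would prove the flag Hilbert scheme $H_{P,Q_d}$ is smooth at $(C_1,X)$. The relevant obstruction spaces are $H^1(\mathbb{P}^3,\mathcal{I}_{C_1/\mathbb{P}^3}(d))$, which vanishes by the Koszul resolution of $C_1=V(\ell,g)$, and $H^1(C_1,\mathcal{N}_{C_1/\mathbb{P}^3})$; writing $\mathcal{N}_{C_1/\mathbb{P}^3}$ as an extension of $\mathcal{O}_{C_1}(1)$ by $\mathcal{O}_{C_1}(\deg C_1)$ and using Serre du
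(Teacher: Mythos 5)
There are two genuine gaps. First, in part (i) you assert that ``since $C_1$ is a plane curve it is a complete intersection,'' but an integral curve of degree $<4$ in $\p3$ need not be planar: the twisted cubic has degree $3$, is integral, spans $\p3$, and is not a complete intersection. This is precisely the one case where part (i) does not reduce to Theorem \ref{ele2}, and it is where the real work lies. The paper handles it by showing, via Macaulay--Gorenstein duality applied to a regular sequence built from two quadric generators of $I(C_1)$, an auxiliary element $P$ of $T_1\setminus I(C_1)$, and an element of the Jacobian ideal, that such a $P$ must have degree at least $d-3$; hence $\codim T_X\NL_{d,1}(\gamma)=\codim I_{d-4}(C_1)=3d-11$, which is then matched against $\codim \pr_2(H)=3d-11$ for the flag Hilbert scheme of twisted cubics. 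Your semiregularity factorisation could in principle recover the same identification of $\ker([C]_{\prim}\cdot{-})$ with $H^0(\mathcal{I}_{C/\p3}(d))$, but you would still have to carry out that computation for the twisted cubic rather than dismiss the curve as a complete intersection; as written the argument simply does not cover the hardest case of (i).

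Second, the proposal stops before part (ii), which is most of the theorem. Nothing in your sketch addresses (a) why every component of codimension at most $3d$ is, for $d\gg 0$, cut out by classes supported on integral curves of degree at most $3$ --- the paper invokes Otwinowska's theorem here, and this is not a consequence of the Jacobian-ring formalism alone; (b) the identification $\ov{\NL_{d,1}(\gamma)}_{\red}=\bigcap_i\ov{\NL_{d,1}([C_i])}_{\red}$; or (c) the criterion that $\ov{\NL_{d,1}(\gamma)}$ is non-reduced exactly when some $C_i.C_j\not=0$. For (c) the paper compares $\codim T_X\NL_{d,1}(\gamma)\le\codim I_{d-4}(\bigcup_iC_i)$ (coming from $\bigcap_i\ker\bar P_i\subset\ker\bar P$) with $\codim\NL_{d,1}(\gamma)=\codim I_d(\bigcup_iC_i)-\dim L_\gamma$, where the strict drop $\dim L_\gamma<\sum_i\dim H_{P_i}$ forced by requiring the curves to meet is exactly what separates the tangent-space codimension from the actual codimension; this also uses the auxiliary computation $\dim|C_i|=0$ and the equality $\dim H_{P_i}=4\deg(C_i)$. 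None of this is present in, or follows from, what you wrote.
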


\section{Proof of Theorem \ref{ele2}}

\begin{note}
 Denote by $S_n^k$ the degree $k$-graded piece of $H^0(\mo_{\pn}(k))$. Define $S_n:=\oplus_{k \ge 0} S_n^k$. Let $X$ be a smooth degree $d$ hypersurface in $\pn$, defined by an equation $F$. 
 Denote by $J_F$, the Jacobian ideal of $F$ generated as an $S_n$-module by the partial derivatives of $F$ with respect to $\frac{\partial}{\partial X_i}$ for $i=1,...,2n+1$, where $X_i$ are the coordinates of $\pn$.
 Define, $R_F:=S_n/J_F$. For $k \ge 0$, let $J_F^k$ (resp. $R_F^k$) symbolize the degree $k$-graded piece of $J_F$ (resp. $R_F$).
 \end{note}
 
 \begin{para}\label{ele6}
 We now recall some standard facts about Hodge locus. Let $X$ be a smooth projective hypersurface in $\pn$ of degree $d$.
Recall, there is a natural morphism from 
$H^{n,n}(X)$ to $H^{n,n}(X)_{\prim}$, where $H^{n,n}(X)_{\prim}$ denotes the primitive cohomology on $H^{n,n}(X)$ 
(see \cite[\S $6.2, 6.3$]{v4} for more on this topic). Denote by $\gamma_{\prim}$ the image of $\gamma$ under this morphism. Using the Lefschetz decomposition theorem, one can see that $\NL_{d,n}(\gamma)$ coincides
with $\NL_{d,n}(\gamma_{\prim})$ i.e., $\gamma$ remians of type $(n,n)$ if and only if so does $\gamma_{\prim}$.
\end{para}

 \begin{para}
 Now, $K_{\pn}=\mo_{\pn}(-2n-2)$, $H^0(K_{\pn}(2n+2))=H^0(\mo_{\pn}) \cong \mb{C}$ generated by 
 \[\Omega:=X_0...X_{2n+1}\sum_i (-1)^i \frac{dX_0}{X_0} \wedge ... \wedge \frac{d\hat{X_i}}{X_i} \wedge ... \wedge \frac{dX_{2n+1}}{X_{2n+1}},\]where the $X_i$ are homogeneous coordinates on $\pn$.
 Recall, for the closed immersion $j:X \to \pn$, denote by $H^{2n}(X,\mb{Q})_{\mr{van}}$, the kernel of the Gysin morphism $j_*$ from $H^{2n}(X,\mb{Q})$ to $H^{2n}(\pn,\mb{Q})$. 
 Now, \cite[Theorem $6.5$]{v5} tells us that there is a surjective map,
 \[\alpha_{n+1}:H^0(\pn,\mo_{\pn}((n+1)d-2n-2)) \to F^{n+1}H^{2n+1}(\pn \backslash X,\mb{C}) \cong F^nH^{2n}(X,\mb{C})_{\mr{van}}\]
 which sends a polynomial $P$ to the resisue of the meromorphism form $P\Omega/F^{n+1}$, where $F$ is the defining equation of $X$ (see \cite[\S $6.1$]{v5} for more).
 Finally, \cite[Theorem $6.10$]{v5} implies that $\alpha_{n+1}$ induces an isomorphism between $R_F^{(n+1)d-(2n+2)}$ and $H^{n,n}(X)_{\prim}$.
\end{para}

\begin{para}
 We now recall a theorem due to Macaulay which will be used throughout this article. A sequence of homogeneous polynomials $G_i \in S_n^{d_i}, i=0,...,2n+1$ 
 with $d_i>0$ is said to be \emph{regular} if the $G_i$ have no common zero. Denote by $I_G$ the 
 ideal in $S_n$ generated by the polynomials $P_i$ for $i=0,...,2n+1$. Denote by $H_G$ the quotient $S_n/I_G$ and by $H_G^i$ the degree $i$ graded piece in $H_G$.
 \begin{thm}[Macaulay]\label{ele21}
 Let $N:=\sum_{i=0}^{2n+1} d_i - 2n-2$. Then, the rank of $H_G^N=1$ and for every integer $k$, the pairing, $H_G^k \times H_G^{N-k} \to H_G^N$ is perfect.
  \end{thm}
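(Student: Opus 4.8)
The plan is to prove that $H_G$ is a graded Artinian Gorenstein $\mb{C}$-algebra whose socle is one-dimensional and concentrated in degree $N$; both assertions of the theorem follow immediately. First, since $G_0,\dots,G_{2n+1}$ define the empty set in $\pn$, the ideal $I_G$ contains a power of the irrelevant ideal $\mathfrak{m}:=\bigoplus_{k>0}S_n^k$, so $H_G$ has finite $\mb{C}$-dimension; as the number of the $G_i$ equals $\dim S_n=2n+2=\codim I_G$, the $G_i$ form a regular sequence in $S_n$. Hence the Koszul complex $K_\bullet:=K_\bullet(G_0,\dots,G_{2n+1};S_n)$, with $K_p=\bigoplus_{|I|=p}S_n(-d_I)$ where $d_I:=\sum_{i\in I}d_i$, is a minimal graded free resolution of $H_G$ over $S_n$. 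From it one reads off the Hilbert series
\[\sum_{k}(\rk H_G^k)\,t^k=\prod_{i=0}^{2n+1}\frac{1-t^{d_i}}{1-t}=\prod_{i=0}^{2n+1}\bigl(1+t+\cdots+t^{d_i-1}\bigr),\]
a polynomial of degree $N=\sum_i(d_i-1)$ with leading coefficient $1$; thus $\rk H_G^N=1$, $H_G^k=0$ for $k>N$, and $\rk H_G^k=\rk H_G^{N-k}$ for every $k$ (a product of palindromic polynomials is palindromic). This settles the first assertion, and reduces the perfectness of the pairing to its nondegeneracy in one variable.

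The crux is to compute the socle $\mr{soc}(H_G)=(0:_{H_G}\mathfrak{m})=\Hom_{S_n}(\mb{C},H_G)$. For this I would use that the Koszul complex $K_\bullet(X_0,\dots,X_{2n+1};S_n)$ resolving $\mb{C}=S_n/\mathfrak{m}$ is self-dual up to the twist $S_n(-2n-2)$: dualizing it into $S_n$ and matching each subset of indices with its complement yields $\Ext^i_{S_n}(\mb{C},M)\cong\mr{Tor}^{S_n}_{2n+2-i}(\mb{C},M)(2n+2)$ for every graded $S_n$-module $M$. Taking $i=0$ and $M=H_G$, and computing $\mr{Tor}^{S_n}_{2n+2}(\mb{C},H_G)$ from the Koszul resolution of $H_G$ — whose differentials all vanish after $\otimes_{S_n}\mb{C}$, since every $G_i$ lies in $\mathfrak{m}$ — one gets $\mr{Tor}^{S_n}_{2n+2}(\mb{C},H_G)\cong\mb{C}(-\sum_id_i)$, hence
\[\mr{soc}(H_G)\;\cong\;\mr{Tor}^{S_n}_{2n+2}(\mb{C},H_G)(2n+2)\;\cong\;\mb{C}(-N).\]
So $\mr{soc}(H_G)$ is one-dimensional and lives in degree $N$; combined with $H_G^N\subseteq\mr{soc}(H_G)$ (which holds because $H_G^{N+1}=0$) this forces $\mr{soc}(H_G)=H_G^N$, i.e.\ $H_G$ is Gorenstein.

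The nondegeneracy is then formal. Given $0\ne x\in H_G^k$, the nonzero graded ideal $xH_G$ has a top nonzero degree $e$, and any $0\ne z\in(xH_G)_e$ satisfies $\mathfrak{m}z\subseteq(xH_G)_{>e}=0$, so $z\in\mr{soc}(H_G)=H_G^N$; hence $e=N$ and $(xH_G)_N\ne0$, i.e.\ $xy\ne0$ for some $y\in H_G^{N-k}$. Running the same argument with the two factors exchanged gives nondegeneracy on both sides, and with $\rk H_G^k=\rk H_G^{N-k}$ the pairing $H_G^k\times H_G^{N-k}\to H_G^N\cong\mb{C}$ is perfect. The only place where care is needed is the middle paragraph: the self-duality identification $\Ext^\bullet_{S_n}(\mb{C},-)\cong\mr{Tor}^{S_n}_{2n+2-\bullet}(\mb{C},-)(2n+2)$ (a baby case of graded local duality), whose proof is elementary but requires honest bookkeeping of signs and twists in the Koszul complex. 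An input of this strength is genuinely necessary: the Hilbert-series symmetry obtained in the first paragraph does not by itself force the pairing to be perfect, as there are Artinian graded algebras with palindromic Hilbert function, of rank $1$ in top degree, that are not Gorenstein. As a sanity check one may verify everything directly for the monomial regular sequence $G_i=X_i^{d_i}$, where $H_G$ has $\mb{C}$-basis $\{\prod_iX_i^{a_i}:0\le a_i\le d_i-1\}$, socle $\mb{C}\cdot\prod_iX_i^{d_i-1}$, and perfect pairing with dual bases $\prod_iX_i^{a_i}\leftrightarrow\prod_iX_i^{d_i-1-a_i}$.
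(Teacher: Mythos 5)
Your proof is correct: the chain Koszul resolution $\Rightarrow$ Hilbert-series symmetry $\Rightarrow$ one-dimensional socle concentrated in degree $N$ $\Rightarrow$ nondegeneracy of the pairing is complete, and you rightly flag that the palindromic Hilbert function alone would not suffice. The paper gives no proof of its own (it cites Voisin, Theorem 6.19), and your argument is essentially the standard one found there, so there is nothing to reconcile.
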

See \cite[Theorem $6.19$]{v5} for the proof of the statement.
\end{para}

\begin{para}\label{ele3}
Denote by $P \in S_n^{(n+1)d-(2n+2)}$ such that $\alpha_{n+1}(\bar{P})=\gamma$.
 Using \cite[Theorem $6.17$]{v5}, we observe that $T_X\NL_{d,n}(\gamma)$ is isomorphic to the preimage of $\ker(.\bar{P}:R_F^d \to R_F^{(n+2)d-(2n+2)})$ under the natural quotient morphism from $S_n^d \to S_n^d/J_F^d$.
 \end{para}
 
 \begin{para}
 It is easy to see that for any $\gamma' \in H^{n,n}(X,\mb{Q})$,  $\NL_{d,n}(\gamma')=\NL_{d,n}(a'\gamma')$ for any $a' \in \mb{Q}$, non-zero. For the rest of this section, we assume $\gamma=[Z]$, 
 where $Z$ is as in the statement of the theorem.
 \end{para}
 
 \begin{note}\label{ele22}
 Denote by $N:=(n+1)d-(2n+2)$. Since $X$ is smooth, the corresponding Jacobian ideal $J_F$ can be generated by a regular sequence of $2n+2$ polynomials $G_i$ of degree $d-1$. 
 Using Theorem \ref{ele21}, we see that there exists a perfect pairing $R_F^k \times R_F^{2N-k} \to R_F^{2N}$ for all $k \le 2N$ and $R_F^{2N}$ is one dimensional complex vector space. 
  Denote by $T_0'$, the subspace of $R_F^{N}$ which is the kernel under the multiplication map,
 $.P:R_F^{N} \to R_F^{2N}.$ 
 Denote by $T_0$ the preimage of $T_0'$ in $S_n^N$ under the natural projection map from $S_n^N$ to $R_F^N$.
 Define $T_1$ the subspace of $S_n$, a graded $S_n$-module such that for all $t \ge 0$, the $t$-graded piece of $T_1$, denoted $T_{1,t}$ is the largest subvector space of $S_n^t$ such that $T_{1,t}\otimes S_n^{N-t}$
 is contained in $T_0$ for $t<N$, $T_{1,N}=T_0$ and $T_{1,N+t}=T_0 \otimes S_n^t$ for $t>0$. 
 \end{note}
 
 \begin{para}
  It follows from the perfect pairing above that $\dim S_n^N/T_{1,N}=\dim R_F^N/T_0'=1$. Using the definition of $T_1$, it follows, 
  \[S_n^k/T_{1,k} \times S_n^{N-k}/T_{1,N-k} \to S_n^N/T_{1,N}\]is a perfect pairing. Hence, $\dim S_n^d/T_{1,d}=\dim S_n^{N-d}/T_{1,N-d}$.  
 \end{para}

\begin{lem}\label{le1}
    The tangent space $T_X(\NL_{d,n}(\gamma))$ coincides with $T_{1,d}$.
   \end{lem}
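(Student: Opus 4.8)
The plan is to rewrite both sides of the claimed equality as divisibility conditions on the polynomial $fP$ with respect to the Jacobian ideal $J_F$, and then to identify the two conditions using Macaulay's duality (Theorem \ref{ele21}, in the form recorded in \ref{ele22}).

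First I would unwind \ref{ele3}: since $(n+2)d-(2n+2)=N+d$, the description there says that $f\in S_n^d$ lies in $T_X(\NL_{d,n}(\gamma))$ if and only if $\bar f\cdot\bar P=0$ in $R_F^{N+d}$, i.e.\ $fP\in J_F^{N+d}$. (Here I would record that $P$ represents $\gamma_{\prim}$, so that by \ref{ele6} there is no loss in working with $\gamma$ itself.) On the other side, because $N-d=nd-(2n+2)>0$, the relevant clause of the definition of $T_1$ in \ref{ele22} is the one for degrees $t<N$, so $f\in T_{1,d}$ if and only if $f\cdot S_n^{N-d}\subseteq T_0$, equivalently $fgP\in J_F^{2N}$ for every $g\in S_n^{N-d}$.

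With these two reformulations in hand the inclusion $T_X(\NL_{d,n}(\gamma))\subseteq T_{1,d}$ is immediate, since $fP\in J_F^{N+d}$ forces $fgP\in J_F^{N+d}\cdot S_n^{N-d}\subseteq J_F^{2N}$ for all $g$. For the converse, suppose $fgP\in J_F^{2N}$ for all $g\in S_n^{N-d}$; this says exactly that the class $\ov{fP}\in R_F^{N+d}$ is killed by every element of $R_F^{N-d}$ under the multiplication pairing $R_F^{N+d}\times R_F^{N-d}\to R_F^{2N}$. By \ref{ele22} this pairing is perfect (the socle degree of $R_F$ is $2N$, and $2N-(N+d)=N-d$), hence $\ov{fP}=0$ in $R_F^{N+d}$, i.e.\ $fP\in J_F^{N+d}$, i.e.\ $f\in T_X(\NL_{d,n}(\gamma))$.

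I do not expect a genuine obstacle: the substance of the lemma is just the remark that, via Macaulay duality, requiring $fgP\in J_F^{2N}$ for all $g$ of complementary degree $N-d$ is no weaker than requiring $fP\in J_F^{N+d}$ outright. The only points needing attention are the degree bookkeeping --- $(n+2)d-(2n+2)=N+d$, $2N-(N+d)=N-d$, and $N-d>0$ so that the correct branch of the definition of $T_1$ is used --- and the mild point, handled by \ref{ele6}, that the polynomial $P$ encodes the primitive part $\gamma_{\prim}$ rather than $\gamma$ directly.
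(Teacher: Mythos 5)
Your proposal is correct and is essentially the paper's own argument: both reformulate membership in $T_{1,d}$ as $\bar f\bar P\cdot R_F^{N-d}=0$ in $R_F^{2N}$ and then use the perfect pairing $R_F^{N+d}\times R_F^{N-d}\to R_F^{2N}$ from \ref{ele22} to conclude this is equivalent to $\bar f\bar P=0$ in $R_F^{N+d}$, which by \ref{ele3} is exactly membership in $T_X(\NL_{d,n}(\gamma))$. Your added bookkeeping (checking $N-d>0$ so the correct clause of the definition of $T_1$ applies, and the role of $\gamma_{\prim}$) is sound and only makes explicit what the paper leaves implicit.
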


   \begin{proof}
    Note that $H \in T_{1,d}$ if and only if $\bar{H} \otimes R_F^{N-d}$ is contained in $T_0'$ 
    which by definition is equivalent to $\bar{P}\bar{H} \otimes R_F^{N-d}=0$ in $R_F^{2N}$. Using the perfect pairing \ref{ele22} we can conclude that 
    $\bar{P}\bar{H}=0$ in $R_F^{N+d}$.
    This is equivalent to $H \in T_X(\NL_{d,n}(\gamma))$.
       \end{proof}

       \begin{para}
    Suppose that $Z$ is defined by $n+1$ polynomials $P_0,...,P_{n}$. Since $Z \subset X$, we can assume that there 
    exist polynomials $Q_0,...,Q_{n}$ of degree $d-\deg P_i$, respectively such that $X$ is defined by a polynomial
    of the form $P_0Q_0+...+P_{n}Q_{n}$. Let $I$ be the ideal in $S_n$ generated by $P_0,...,P_{n}$ and $Q_0,...,Q_{n}$.
               \end{para}

               \begin{prop}\label{le6}
                The $k$-graded pieces, $T_{1,k}=I_k$ for all $k \le N$.                
               \end{prop}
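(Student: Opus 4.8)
The plan is to prove the two inclusions $I_k \subseteq T_{1,k}$ and $T_{1,k} \subseteq I_k$ separately for all $k \le N$, using the explicit description of $X$ via the relation $F = P_0Q_0 + \dots + P_nQ_n$ together with Macaulay duality (Theorem~\ref{ele21}) applied to the regular sequences at hand.

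For the inclusion $I_k \subseteq T_{1,k}$, the key point is to identify the class $\gamma = [Z]$ in $R_F^N$, i.e.\ to exhibit a polynomial $P \in S_n^N$ with $\alpha_{n+1}(\bar P) = [Z]$, and to see that $P$ can be taken to be (a scalar multiple of) a product of the $Q_i$'s, or more precisely that $\bar P \cdot \bar G = 0$ in $R_F^{2N}$ for every $G \in I_{N}$. The residue/adjunction computation of the fundamental class of a complete intersection subscheme of a hypersurface is classical (Griffiths, Carlson--Griffiths): when $Z = V(P_0,\dots,P_n) \subset X = V(\sum P_iQ_i)$, the primitive part of $[Z]$ is represented in $R_F^N$ by (a constant times) the determinant-type expression built from the $Q_i$, and in any case lies in the annihilator of $I_N$ under the pairing $R_F^N \times R_F^N \to R_F^{2N}$. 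Granting this, for $H \in I_k$ with $k \le N$ we get $\bar P \cdot \bar H \cdot R_F^{N-k} \subseteq \bar P \cdot I_N = 0$ in $R_F^{2N}$, so by the perfect pairing $\bar H \otimes R_F^{N-k} \subseteq T_0'$, which by Lemma~\ref{le1}-style reasoning (the definition of $T_1$) means $H \in T_{1,k}$. For $k = N$ this says $I_N \subseteq T_0$, and for $k > N$ it is automatic from $T_{1,N+t} = T_0 \otimes S_n^t$; so the content is genuinely in the range $k \le N$.

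For the reverse inclusion $T_{1,k} \subseteq I_k$, the strategy is a dimension count: it suffices to show $\dim S_n^k / I_k \le \dim S_n^k / T_{1,k}$, since we already have one inclusion. By the perfect pairing established just before Lemma~\ref{le1}, $\dim S_n^k/T_{1,k} = \dim S_n^{N-k}/T_{1,N-k}$, and symmetrically one would want $\dim S_n^k/I_k = \dim S_n^{N-k}/I_{N-k}$. The latter is exactly Macaulay's theorem applied to the regular sequence $P_0,\dots,P_n,Q_0,\dots,Q_n$: these $2n+2$ forms have no common zero (a common zero would be a singular point of $X$ lying on $Z$, contradicting smoothness of $X$, since at such a point all partials of $F = \sum P_iQ_i$ vanish), they have degrees summing to $\sum \deg P_i + \sum \deg Q_i = \sum \deg P_i + \sum(d - \deg P_i)$; one checks this socle degree equals $N = (n+1)d - (2n+2)$, so $H_I^N$ is one-dimensional and $H_I^k \times H_I^{N-k} \to H_I^N$ is perfect, giving the Gorenstein symmetry of $S_n/I$ in degree $\le N$. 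Combining, $\dim S_n^k/I_k = \dim S_n^k/T_{1,k}$ for all $k \le N$, and with $I_k \subseteq T_{1,k}$ this forces equality.

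The main obstacle I expect is the first inclusion — specifically, pinning down the representative $P$ of $[Z]_{\prim}$ in $R_F^N$ precisely enough to conclude $\bar P \cdot I_N = 0$. One cannot simply quote that $[Z]$ is algebraic; one needs the explicit Griffiths-residue formula for the class of a complete intersection in terms of the $P_i$ and $Q_i$, and then a Macaulay-duality argument (using the regular sequence $P_0,\dots,P_n,Q_0,\dots,Q_n$ again) to see that multiplication by this representative kills $I_N$ inside $R_F^{2N}$. A clean way to organize this: show that the natural surjection $S_n \to S_n/I$ is compatible with the residue map in the sense that the annihilator of $I_N$ in $R_F^N$ is one-dimensional and is spanned by the image of the socle generator of $S_n/I$, and that this socle generator is precisely a polynomial computing $[Z]$. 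Verifying the socle-degree bookkeeping and the non-vanishing of the pairings are routine once set up, so the real work is this identification of the fundamental class.
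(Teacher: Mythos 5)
Your overall architecture (show $I\subseteq T_1$, then use Gorenstein/Macaulay duality for the regular sequence $P_0,\dots,P_n,Q_0,\dots,Q_n$ to force equality in degrees $\le N$) matches the paper's, and your verification that this sequence is regular with socle degree $N$ is correct. But the first inclusion, which you yourself flag as the main obstacle and only establish conditionally (``Granting this\dots''), is precisely the substantive content of the proposition, and your proposed route --- writing down the explicit Griffiths--residue representative of $[Z]_{\prim}$ as a determinantal expression in the $Q_i$ and checking by hand that it kills $I_N$ --- is not carried out. The paper avoids this computation entirely by a linkage trick: the standard fact (\cite[$4.a.4$]{GH}) already gives that the defining ideal $(P_0,\dots,P_n)$ of $Z$ lies in $T_1$; to get the $Q_i$ in as well, one observes that $Z\cup Z_1$, with $Z_1=V(Q_0,P_1,\dots,P_n)$, is the complete intersection $X\cap V(P_1,\dots,P_n)$, so $[Z]_{\prim}=-[Z_1]_{\prim}$, and iterating over the indices gives $[Z]_{\prim}=\pm[V(Q_0,\dots,Q_n)]_{\prim}$; applying the same standard fact to $Z_2=V(Q_0,\dots,Q_n)$ (and using that $\NL_{d,n}(\gamma)=\NL_{d,n}(a\gamma)$, so the $T_1$'s agree) puts the $Q_i$ in $T_1$ too. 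That swap is the missing idea; without it, or without actually performing the residue computation, your proof of $I\subseteq T_1$ is incomplete.

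A secondary, smaller issue: in the reverse inclusion your stated logic is that the two duality symmetries $\dim S_n^k/I_k=\dim S_n^{N-k}/I_{N-k}$ and $\dim S_n^k/T_{1,k}=\dim S_n^{N-k}/T_{1,N-k}$ ``combine'' to give $\dim S_n^k/I_k=\dim S_n^k/T_{1,k}$. They do not: both symmetries are compatible with a strict inclusion $I_k\subsetneq T_{1,k}$ in complementary degrees. What actually closes the argument is that $I_N$ and $T_{1,N}$ both have codimension one in $S_n^N$, so $I_N=T_{1,N}$ once $I_N\subseteq T_{1,N}$; then for $H\in T_{1,k}$ one has $H\cdot S_n^{N-k}\subseteq T_{1,N}=I_N$ by the definition of $T_1$, and perfectness of the pairing on $S_n/I$ forces $H\in I_k$. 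This is the content of the paper's one-line ``Gorenstein contained in Gorenstein of the same socle degree'' conclusion, and it is easily repaired from the ingredients you already have.
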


\begin{proof}
 Denote by $Z_1$ the subschemes in $\pn$, defined by $Q_0=P_1=...=P_n=0$. Since $Z \bigcup Z_1$ is the intersection of $X$ and $\{P_1=...=P_n=0\}$, then 
 $[Z]=-[Z_1] \mod \mb{Q}H_X^n$ in the cohomology group $H^{n,n}(X,\mb{Q})$. So, $[Z]_{\prim}=-[Z_1]_{\prim}$. 
 Denote by $Z_2$ the subvariety defined by $Q_0=...=Q_n=0$.
 Proceeding similarly, we get $[Z]_{\prim}=a[Z_2]_{\prim}$ for some integer $a$. Using \cite[$4.a.4$]{GH}, we have $(P_0,...,P_n,Q_0,...,Q_n) \subset T_1$. 
 Since $X$ is smooth the sequence $\{P_0,...,P_n,Q_0,...,Q_n\}$ is a regular sequence. 
 Using Theorem \ref{ele21} we can conclude that $\dim S_n^N/I_N=1$, where $I_N$ denotes the degree $N$ graded piece of $I$
 and \[S/I|_k \times S/I|_{N-k} \to S/I|_N\] is perfect pairing. 
 So, $I$ is Gorenstein of socle degree $N$ contained in $T_1$ which is Gorenstein of the same socle degree.
  So, $T_{1,k}=I_k$ for all $k \le N$.
\end{proof}

\begin{para}
 The parameter space, say $H$ of complete intersection subschemes in $\pn$ of codimension $n+1$, defined by $n+1$ polynomials of degree $\deg(P_i)$, respectively is irreducible. In particular, it is an open subscheme of 
 \[\mb{P}(S_n^{\deg P_0}) \times ... \times \mb{P}(S_n^{\deg P_n})\]
 which is irreducible. Denote by $R_0$ the Hilbert polynomial of $Z$ as a subscheme in $\pn$. Consider the flag Hilbert scheme $H_{R_0,Q_d}$ and the projection map $\pr_1$ 
 which is the projection onto the first component. 
 Since the generic fiber of $\pr_1$
 is isomorphic to $\mb{P}(I_d(Z))$ for the generic subscheme $Z$ on $\pr_1H_{R_0,Q_d}$, it is irreducible, where $I_d(Z)$ is the degree $d$ graded piece of the ideal, $I(Z)$, of $Z$. So, 
 there exists an unique irreducible component in $H_{R_0,Q_d}$ such that the image under $\pr_1$ of this component coincides with $H$. For simplicity of notation, 
 we denote by $H_{R_0,Q_d}$ this irreducible component, since we are interested only in this scheme. 
\end{para}

\begin{para}[Proof of Theorem \ref{ele2}]\label{ele8}
 Using basic deformation theory and Hodge theory, we can conclude that $\pr_2(H_{R_0,Q_d})$ is contained in $\overline{\NL_{d,n}(\gamma)}$.
 So, \[\codim \pr_2(H_{R_0,Q_d}) \ge \codim \NL_{d,n}(\gamma) \ge \codim T_X \NL_{d,n}(\gamma).\]
 Now, there is a natural morphism, denoted $p$ from $T_{1,d}$ to $H_{Q_d}$ which maps $F_1$ to the zero locus of $F_1$.
    Since every element of $T_{1,d}$ defines a hypersurface containing a subscheme with Hilbert polynomial $R_0$, $\overline{\pr_2(H_{R_0,Q_d})}$ 
    contains $\overline{\Ima p}$. Since the zero locus of a polynomial is invariant under multiplication by a scalar,
    \[\dim T_{1,d}=\dim \overline{\Ima p}+1.\]Finally,
    \[\codim \pr_2(H_{R_0,Q_d})=\dim \mb{P}(H^0(\mo_{\pn}(d)))-\dim \overline{\pr_2(H_{R_0,Q_d})} \le\] \[\le (h^0(\mo_{\pn}(d))-1)-\dim \ov{\Ima p} \le h^0(\mo_{\pn}(d))-\dim T_{1,d}=\codim T_X\NL_{d,n}(\gamma)\]
    where the last equality follows from Lemma \ref{le1}. This proves Theorem \ref{ele2}. 
    \end{para}
    
    \begin{para}\label{ele0}
        Furthermore, note that $\ov{\NL_{d,1}(\gamma)}$ is reduced and parametrizes all degree $d$ surfaces in $\p3$ containing a complete intersection curve with the
        Hilbert polynomial $P$. This is a part of Theorem \ref{ele1}(ii).
 \end{para}

\section{Proof of Theorem \ref{ele1}}

\begin{para}\label{ele9}
 If $C_1$ is a complete intersection curve then reducedness of $\NL_{d,1}([C_1])$ follows from \ref{ele0}.
 
 If $C_1$ is an integral curve, $\deg(C_1) < 4$ and $C_1$ not complete intersection then $C_1$ is a twisted cubic. Recall, the twisted cubic $C_1$ is generated by $3$ polynomials of degree $2$ each. 
 Suppose $P$ is a polynomial in $T_1$
 such that $P$ is not contained in $I(C_1)$. Since the zero locus of the ideal, say $I'$ generated by $I(C_1)$ and $P$ is non-empty, $J_F$ (which is base point free) is not contained in $I'$. 
 Since the Jacobian ideal, $J_F \subset T_1$, we can show that there is a regular sequence in $T_1$
 consisting of $4$ elements, two of which are the generators of $I(C_1)$, the third one is $P$ and the forth is an element in $J_F^{d-1}$. Since, $\codim T_{1,2d-4}=1$, Theorem \ref{ele21} implies 
 $2+2+\deg(P)+d-1-4\ge 2d-4$. So, $\deg(P) \ge d-3$.
 By Proposition \ref{le6}, \[\codim T_X(\NL_{d,1}(\gamma)=\codim T_{1,d}=\codim T_{1,d-4}=\]\[=\codim I_{d-4}(C_1)=3.(d-4)+1=3d-11.\]
 \end{para}
 
 \begin{para}
  Using basic deformation theory and Hodge theory we can conclude that there exists an unique irreducible component $H$ of $H_{P,Q_d}$ whose generic
  element is $(C,X)$, where $C$ is a twisted cubic contained in $X$ such that $\pr_2(H)$ is contained in $\ov{\NL_{d,1}(\gamma)}$.
  It is easy to compute that $\codim(\pr_2(H))=3d-11$. 
  So, \[3d-11\ge \codim \ov{\NL_{d,1}(\gamma)} \ge \codim T_X(\NL_{d,1}(\gamma))=3d-11.\] 
  So, $\ov{\NL_{d,1}(\gamma)}$ is reduced and parametrizes smooth degree $d$ surfaces containing a twisted cubic.
  This finishes the proof of $(i)$.
\end{para}

\begin{para}
 We now recall a result due to Otwinowska that will help us make the characterization of the irreducible components of $\NL_{d,1}$ as in Theorem \ref{ele1}(ii).
 \begin{thm}[{\cite[Theorem $1$]{ot}}]\label{gh22}
 Let $\gamma$ be a Hodge class on a smooth degree $d$ surface. There exists $C \in \mathbb{R}_+^*$ depending only on $r$ such that for $d \ge C(r-1)^8$ if 
 $\codim \NL_{d,1}(\gamma) \le (r-1)d$ 
 then $\gamma_{\prim}=\sum_{i=1}^ta_i[C_i]_{\prim}$ where $a_i \in \mathbb{Q}^*$, $C_i$ are integral curves and $\deg(C_i) \le (r-1)$ for
 $i=1,...,t$ for some positive integer $t$.
\end{thm}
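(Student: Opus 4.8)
\begin{para}[Sketch of a proof of Theorem \ref{gh22}]
The plan is to convert the codimension hypothesis into a smallness statement about the Hilbert function of a graded Artinian Gorenstein quotient of $S_1$, to recognise that quotient — in the relevant band of degrees — as the homogeneous coordinate ring of a curve of degree at most $r-1$ via Macaulay's growth bound and Gotzmann's persistence theorem, and then to split that curve off from $\gamma_{\prim}$ by a residuation argument, iterating on $r$.

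\emph{Step 1: reduction to commutative algebra.} By \ref{ele6} we may assume $\gamma=\gamma_{\prim}\neq 0$, so the set-up of \ref{ele3}--\ref{ele22} applies with $n=1$ and $N=2d-4$. Since the Zariski tangent space of a scheme at a point has dimension at least that of the scheme, $\codim T_X\NL_{d,1}(\gamma)\le\codim\NL_{d,1}(\gamma)\le (r-1)d$. By Lemma \ref{le1} this tangent space is $T_{1,d}$, and the perfect pairing of \ref{ele22} gives $\dim_{\mathbb C}S_1^{\,d-4}/T_{1,d-4}=\dim_{\mathbb C}S_1^{\,d}/T_{1,d}\le (r-1)d$. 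Now $A:=S_1/T_1$ is a graded Artinian Gorenstein quotient of $R_F$ of socle degree $2d-4$, whose Hilbert function is thus only linear in $d$ in degree $d-4$, while $\dim_{\mathbb C}S_1^{\,d-4}$ is cubic in $d$; this contrast is the input for everything that follows.

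\emph{Step 2: recognising a curve.} Let $\mathfrak a\subseteq S_1$ be the ideal generated by $\bigoplus_{k\le d-4}T_{1,k}$, so that $\mathfrak a\subseteq T_1$ and $h_{S_1/\mathfrak a}$ agrees with the Hilbert function of $A$ in degrees $\le d-4$. Starting from $h_{S_1/\mathfrak a}(0)=1$ and $h_{S_1/\mathfrak a}(d-4)\le (r-1)d$, Macaulay's bound on the growth of Hilbert functions forces $h_{S_1/\mathfrak a}$ to fall below $\binom{k+3}{3}$ already at some degree $m_0$ bounded in terms of $r$ alone, and then to grow no faster than a linear polynomial $ek+c$ with $e\le r-1$; Gotzmann's persistence theorem then identifies the saturation of $\mathfrak a$ with the homogeneous ideal $I_\Gamma$ of a one-dimensional subscheme $\Gamma\subseteq\p3$ of degree $e\le r-1$, and shows $T_{1,k}=I_\Gamma(k)$ for $k\le d-4$. (This last equality, upgrading the saturation statement to an identity of low-degree parts of $T_1$, comes from bounding the degree of an element of $T_1$ not vanishing on $\Gamma$, exactly as in \ref{ele9} for the twisted cubic and using that $J_F\subseteq T_1$ is base point free.) Let $C_1,\dots,C_t$ be the distinct integral components of $\Gamma_{\mathrm{red}}$; each has $\deg C_i\le\deg\Gamma\le r-1$.

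\emph{Step 3: peeling off the curve and inducting.} Since $I_\Gamma(k)\subseteq T_{1,k}$ in the range $k\le d-4$ that controls $\NL_{d,1}(\gamma)$, the residuation/Cayley--Bacharach formalism of \cite[$4.a.4$]{GH} — already used in the proof of Proposition \ref{le6} — yields rationals $a_i$ and a primitive Hodge class $\gamma'_{\prim}$ with $\gamma_{\prim}=\sum_{i=1}^t a_i[C_i]_{\prim}+\gamma'_{\prim}$. Combining this with a deformation-theoretic argument that the surfaces of the component of $\NL_{d,1}(\gamma)$ through $X$ continue to contain a flat deformation of $\Gamma$, one obtains, locally near $X$, $\NL_{d,1}(\gamma)=\NL_{d,1}\!\bigl(\textstyle\sum a_i[C_i]\bigr)\cap\NL_{d,1}(\gamma')$ and hence a codimension drop $\codim\NL_{d,1}(\gamma')\le\bigl(r-1-\deg\Gamma\bigr)d+O_r(1)$. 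Absorbing the error term into the lower bound on $d$ and applying the statement to $\gamma'$ with the strictly smaller parameter $r-\deg\Gamma$, one iterates until the residual primitive class vanishes; this produces, with the bounds $\deg C_i\le r-1$ preserved throughout, the asserted decomposition $\gamma_{\prim}=\sum_{i=1}^t a_i[C_i]_{\prim}$ with the $C_i$ integral.

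\emph{The main obstacle.} The delicate part is the quantitative bookkeeping in Steps 2 and 3. Each pass through Macaulay--Gotzmann costs a polynomial amount in $r$ (in the cut-off $m_0$, in the $O_r(1)$ loss in the codimension of the residual class, and in the multiplicities $a_i$), and at each stage one needs $d$ large compared with the accumulated loss for the estimates of Steps 1--3 to be non-vacuous; iterating at most $r-1$ times is exactly what forces the threshold $d\ge C(r-1)^8$. A second subtlety is that $S_1=\mathbb C[X_0,\dots,X_3]$ has four variables, so graded Artinian Gorenstein quotients of it are not known to be unimodal; one cannot shortcut Step 2 by appeal to unimodality and must use Macaulay's bound together with Gotzmann persistence in full strength. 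Finally, one has to make sure — via the global description of $\overline{\NL_{d,1}([C_i])}$ coming from the curve-by-curve version of Theorem \ref{ele2} and basic deformation theory — that the classes $[C_i]_{\prim}$ genuinely decompose $\gamma_{\prim}$ and not merely its infinitesimal shadow $T_{1,d}$.
\end{para}
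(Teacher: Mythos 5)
A preliminary remark: the paper does not prove this statement at all. It is imported verbatim as Theorem $1$ of Otwinowska \cite{ot} and used as a black box in the proof of Theorem \ref{ele1}(ii), so there is no internal argument to measure your sketch against; what follows is an assessment of the sketch on its own terms.

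Your outline does capture the general shape of the known arguments (translate the codimension bound into a bound on the Hilbert function of the Artinian Gorenstein quotient $S_1/T_1$, recognise the low-degree part of $T_1$ as the ideal of a curve of degree $\le r-1$, then decompose the class), but the two steps that carry essentially all of the difficulty are asserted rather than argued. In Step 2, Macaulay's growth bound and Gotzmann persistence give you, at best, that the ideal generated by the low-degree part of $T_1$ has a saturation defining a one-dimensional scheme $\Gamma$ of small degree; the upgrade to the exact identity $T_{1,k}=I_\Gamma(k)$ for all $k\le d-4$ is a substantial theorem in its own right --- it is where symmetrizer-type lemmas and the fine structure of the Jacobian ring enter, and where the threshold $d\ge C(r-1)^8$ is actually produced --- and the analogy with \ref{ele9} does not transfer, since there one starts from a curve known in advance to be a twisted cubic. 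In Step 3 the gap is more serious: \cite[$4.a.4$]{GH} applies to subschemes residual to one another in a complete intersection, whereas the $\Gamma$ you have produced is an abstract one-dimensional subscheme with no complete-intersection linkage in sight, so the claimed identity $\gamma_{\prim}=\sum_i a_i[C_i]_{\prim}+\gamma'_{\prim}$ does not follow from residuation. Passing from the infinitesimal information ``$T_{1,\le d-4}=I_\Gamma$'' to a statement about the Hodge class $\gamma$ itself is exactly the content of the theorem, and your sketch does not supply the mechanism; the induction on $r-\deg\Gamma$ with an $O_r(1)$ codimension loss is likewise unsubstantiated. In short: the skeleton is plausible, but the proposal is a plan, not a proof.
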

This implies the following:
\end{para}

\begin{prop}
 Let $d \gg 0$, $\gamma$ be a Hodge class in a smooth degree $d$ surface in $\p3$ such that $\codim \ov{\NL_{d,1}(\gamma)} \le 3d$. Then there exists 
 integral curves $C_1,...,C_t$ of degree at most $3$ such that $\gamma=\sum_{i=1}^t a_i[C_i]+bH_X$ for some integers $a_i, b$ and $\NL(\gamma)_{\red}$
 is the same as $\bigcap_{i=1}^t \NL([C_i])_{\red}$.
\end{prop}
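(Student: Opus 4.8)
The plan is to deduce this proposition directly from Otwinowska's Theorem \ref{gh22} together with the reducedness statements already established. First I would fix $r$ so that the hypothesis $\codim \ov{\NL_{d,1}(\gamma)} \le 3d$ falls under Theorem \ref{gh22} with $r-1 = 3$ (choosing $d \gg 0$ so $d \ge C\cdot 3^8$). Since $\codim \NL_{d,1}(\gamma) \le \codim \ov{\NL_{d,1}(\gamma)} \le 3d = (r-1)d$, Theorem \ref{gh22} applies and gives $\gamma_{\prim} = \sum_{i=1}^t a_i [C_i]_{\prim}$ with $a_i \in \mb{Q}^*$ and $C_i$ integral curves of degree at most $3$. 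Then I would lift this back to $H^{1,1}(X,\mb{Q})$: since the kernel of $H^{1,1}(X) \to H^{1,1}(X)_{\prim}$ is $\mb{Q}H_X$ (the Lefschetz decomposition), the equality of primitive parts forces $\gamma = \sum_{i=1}^t a_i[C_i] + bH_X$ for some $b \in \mb{Q}$; clearing denominators (and using that $\NL$ is insensitive to nonzero rational scaling, as noted in the excerpt) I may take $a_i, b \in \mb{Z}$.

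The second half is the identification $\NL(\gamma)_{\red} = \bigcap_{i=1}^t \NL([C_i])_{\red}$. For the inclusion $\bigcap_i \NL([C_i])_{\red} \subseteq \NL(\gamma)_{\red}$: on any point of the right-hand intersection each $[C_i]$ (hence each $[C_i]_{\prim}$, and also $H_X$, which is always of type $(1,1)$) remains of type $(1,1)$, so their $\mb{Q}$-linear combination $\gamma$ does too, giving a point of $\NL(\gamma)$; taking reductions gives the inclusion. For the reverse inclusion I would argue via tangent spaces using \ref{ele3}: writing $\gamma_{\prim} = \alpha_2(\bar P)$ and each $[C_i]_{\prim} = \alpha_2(\bar P_i)$, we have $\bar P = \sum a_i \bar P_i$ in $R_F$, so $\ker(\cdot \bar P : R_F^d \to R_F^{3d-4}) \supseteq \bigcap_i \ker(\cdot \bar P_i)$, hence $T_X \NL_{d,1}(\gamma) \supseteq T_X(\bigcap_i \NL_{d,1}([C_i]))$ — but this is the wrong direction for equality of the reduced schemes, so instead I would combine the dimension count from part (i) (each $\ov{\NL_{d,1}([C_i])}$ is reduced and equals $\pr_2$ of the appropriate flag Hilbert scheme when $\deg C_i < 4$, which covers $\deg C_i \le 3$) with Theorem \ref{gh22}'s structural conclusion applied locally at a general point of each component of $\NL(\gamma)$.

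More precisely, the cleanest route for the reverse inclusion is: let $L$ be an irreducible component of $\NL_{d,1}(\gamma)$ of maximal dimension (so $\codim L \le 3d$), take $X' \in L$ general; then the first half of the argument, applied with $X'$ in place of $X$, shows the Hodge class governing $L$ near $X'$ is again of the form $\sum a_i'[C_i'] + b'H_X$ with $\deg C_i' \le 3$, and moreover that $L$ locally coincides with $\bigcap_i \NL_{d,1}([C_i'])$ — this last point is exactly the content one extracts from Otwinowska's proof, since the bound $\codim \le 3d$ forces the component to be cut out by the "small degree" curve conditions. Matching the two descriptions (using that the $C_i$ are the unique integral curves whose classes appear) gives $\{C_i'\} = \{C_i\}$ and hence $L \subseteq \bigcap_i \NL([C_i])_{\red}$; since this holds for every maximal component and the reverse inclusion from the previous paragraph is already established, we get the scheme-theoretic equality of reduced loci.

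The main obstacle I expect is the reverse inclusion, specifically extracting from Theorem \ref{gh22} not merely that $\gamma_{\prim}$ decomposes into low-degree curve classes but that the Noether-Lefschetz locus $\NL_{d,1}(\gamma)$ itself is, as a reduced scheme, the intersection of the individual loci $\NL_{d,1}([C_i])$ rather than something strictly smaller — this requires a careful local analysis at a general point of each top-dimensional component, controlling that no "extra" curve conditions (which would only decrease the dimension) can appear once $\codim \le 3d$, and that the decomposition of $\gamma_{\prim}$ is rigid along the component. Handling the Lefschetz-decomposition bookkeeping (passing between $\gamma$ and $\gamma_{\prim}$, and between the several integral curves) and the compatibility of the flag-Hilbert-scheme descriptions from part (i) is routine by comparison.
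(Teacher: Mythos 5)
Your first paragraph (obtaining $\gamma=\sum_i a_i[C_i]+bH_X$ from Theorem \ref{gh22} plus the Lefschetz decomposition) and the easy inclusion $\bigcap_i\NL_{d,1}([C_i])_{\red}\subseteq\NL_{d,1}(\gamma)_{\red}$ are fine and agree with the paper. The genuine gap is the reverse inclusion, which you in effect defer to a black box: the assertion that ``$L$ locally coincides with $\bigcap_i\NL_{d,1}([C_i'])$ \ldots\ is exactly the content one extracts from Otwinowska's proof.'' Theorem \ref{gh22} as stated (and as used in the paper) gives only a decomposition of $\gamma_{\prim}$ into classes of low-degree integral curves; it says nothing about $\NL_{d,1}(\gamma)$ being cut out by the individual conditions $\NL_{d,1}([C_i])$, and your own tangent-space computation shows the naive argument points the wrong way. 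What is missing is a reason why each $[C_i]$ \emph{separately} remains of type $(1,1)$ along $\NL_{d,1}(\gamma)$, rather than only the combination $\sum_i a_i[C_i]$.

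The paper supplies exactly this step by a mechanism you do not use. Let $\Lambda\subset H^2(X,\mb{Q})$ be the maximal subspace of classes remaining of type $(1,1)$ along $\NL_{d,1}(\gamma)$; tautologically $\NL_{d,1}(\gamma)_{\red}=\bigcap_{\lambda\in\Lambda}\NL_{d,1}(\lambda)_{\red}$, and for a very general $X'\in\NL_{d,1}(\gamma)$ the space $\Lambda$ is identified with $\NS(X')\otimes\mb{Q}$. Every $\lambda\in\Lambda$ satisfies $\NL_{d,1}(\gamma)\subseteq\NL_{d,1}(\lambda)$, hence $\codim\NL_{d,1}(\lambda)\le 3d$, so Theorem \ref{gh22} applies to \emph{every} class of $\Lambda$ and shows that $\Lambda$ is generated by $H_X$ and classes of integral curves $C_i\subset X'$ of degree at most $3$; since $\Lambda$ is the full N\'eron--Severi group of $X'$, these $[C_i]$ themselves lie in $\Lambda$, i.e.\ each one individually stays of type $(1,1)$ along $\NL_{d,1}(\gamma)$. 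This gives $\NL_{d,1}(\gamma)_{\red}=\bigcap_i\NL_{d,1}([C_i])_{\red}$ at once. To repair your argument you would need to reproduce this N\'eron--Severi/very-general-point reasoning (or genuinely open up Otwinowska's proof); the citation alone does not carry the claim. A smaller point: the curves in the conclusion naturally live on the very general deformation $X'$ rather than on the original $X$, and the decomposition of $\gamma_{\prim}$ into curve classes need not be unique, so your ``matching $\{C_i'\}=\{C_i\}$'' step also needs care.
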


\begin{proof}
 Let $X \in \NL_{d,1}(\gamma)$. There exists a maximal $\mb{Q}$-vector space $\Lambda \subset H^2(X,\mb{Q})$ such that $\Lambda$ remains of type $(1,1)$ in $\NL_{d,1}(\gamma)$
 i.e., $\NL_{d,1}(\gamma)_{\red}=\bigcap_{\gamma_i \in \Lambda} \NL_{d,1}(\gamma_i)_{\red}$. There exists a surface $X' \in \NL_{d,1}(\gamma)$
 such that the N\'{e}ron-Severi group $\mr{NS}(X')$ is the translate (under deformation from $X$ to $X'$) 
 of $\Lambda$ in $H^2(X',\mb{Z})$ which we again denote by $\Lambda$ for convinience. Then, Theorem \ref{gh22} implies that any $\gamma \in \Lambda$
 is of the form $\sum_i a_i[C_i]+bH_X$ with $\deg(C_i) \le 3$. So, $\Lambda$ is generated by classes of curves of degree at most $3$ and $H_X$.
 Note that the classes of these curves are also contained in $\Lambda$ since $\Lambda$ is the complete N\'{e}ron-Severi group of $X'$.
 This proves the proposition, which is also the first part of Theorem \ref{ele1}(ii). 
\end{proof}

\begin{para}\label{a5e}
We now come to the proof of the final part of the theorem.
Suppose now that $\gamma$ is as in the above proposition i.e., of the form $\sum_{i=1}^t a_i[C_i]+bH_X$ such that $\NL(\gamma)_{\red}=\bigcap_{i=1}^t \NL([C_i])_{\red}$.
 Denote by $\bar{P}_i$ the element in $R_F^{2d-4}$ such that $\alpha_2(\bar{P}_i)=[C_i]_{\prim}$ for $i=1,...,t$. Since $\alpha_2$ is a linear map, 
 $\alpha_2(\sum_{i=1}^r a_i\bar{P}_i)=\sum_i a_i[C_i]_{\prim}$. Denote by
 $\bar{P}:=\sum_{i=1}^r a_i\bar{P}_i$. So, $\alpha_2(\bar{P})=\gamma$. Denote
 by $T_{1,d-4}^{[C_i]}$ the corresponding $T_{1,d-4}$ in \ref{ele22} obtained by replacing $P$ by $\bar{P}_i$ for $i=1,...,r$.
 Note that $\codim T_X\NL_{d,1}(\gamma)=\codim T_{1,d}=\codim T_{1,d-4},$ where the last equality is due to perfect pairing. Note that, $\bigcap_{i=1}^r T_{1,d-4}^{[C_i]} \subset T_{1,d-4}$ because $\bar{P}=\sum_i a_i\bar{P}_i$,
 so  $\bigcap_{i=1}^r \ker \bar{P}_i \subset \ker \bar{P}$. Therefore, $\codim T_X\NL_{d,1}(\gamma) \le \codim I_{d-4}(\bigcup_{i=1}^r C_i).$
 \end{para}

 Before we go to the last step of the proof we need the following computation:
 
 \begin{lem}\label{a4e}
Let $d \ge 5$ and $C$ be an effective divisor on a smooth degree $d$ surface $X$ of the form $\sum_i a_iC_i,$ where $C_i$ are integral curves with $\deg(C)+4 \le d$.
Then, $\dim |C|=0,$ where $|C|$ is the linear system associated to $C$.
\end{lem}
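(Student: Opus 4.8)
The plan is to show that the hypothesis $\deg(C) + 4 \le d$ forces $h^0(X, \mo_X(C)) = 1$, so that $C$ is the unique effective divisor in its linear equivalence class and $\dim |C| = 0$. First I would use the exact sequence
\[
0 \to \mo_{\p3}(C - X) \to \mo_{\p3}(C) \to \mo_X(C) \to 0,
\]
where here I abusively write $\mo_{\p3}(C)$ for $\mo_{\p3}(\deg C)$; since $\deg(C) - d = \deg(C) - \deg(X) < 0$, we have $H^0(\p3, \mo_{\p3}(C-X)) = 0$ and $H^1(\p3, \mo_{\p3}(C - X)) = 0$ as well (line bundles on $\p3$ have no intermediate cohomology, and $H^1$ of any $\mo_{\p3}(k)$ vanishes). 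Hence $H^0(X, \mo_X(C)) \cong H^0(\p3, \mo_{\p3}(\deg C))$ would be the naive expectation, but that space has dimension $\binom{\deg C + 3}{3} > 1$ in general, so this is too crude: it computes $H^0$ of the line bundle $\mo_X(\deg C \cdot H_X)$, not of $\mo_X(C)$ — and these differ precisely because $C$ need not be cut out by a hypersurface.

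So the cleaner route is to argue directly with the curve. I would instead consider the ideal sheaf sequence $0 \to \mo_X \to \mo_X(C) \to \mo_C(C) \to 0$ (twisting $0 \to \mo_X(-C) \to \mo_X \to \mo_C \to 0$ by $\mo_X(C)$), which gives
\[
0 \to H^0(\mo_X) \to H^0(\mo_X(C)) \to H^0(\mo_C(C)) \to H^1(\mo_X).
\]
Since $X$ is a smooth hypersurface in $\p3$ of degree $d \ge 5$, it is a simply connected surface with $H^1(\mo_X) = 0$ (indeed $h^{1,0}(X) = 0$ for any smooth surface in $\p3$). Therefore $\dim |C| = \dim H^0(\mo_X(C)) - 1 = \dim H^0(\mo_C(C)) = \dim H^0(\mo_C(C))$, and it remains to show $H^0(C, \mo_C(C)) = 0$, i.e. the normal-bundle-type line bundle $\mo_C(C) = \mo_X(C)\big|_C$ has no sections. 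By adjunction on $X$, $\deg(\mo_C(C)) = C^2$, which by the Hodge index theorem relative to the hyperplane class satisfies $C^2 \le (C \cdot H_X)^2 / H_X^2 = \deg(C)^2 / d$; combined with $\deg(C) \le d - 4$ this is small, but smallness of degree alone does not kill $H^0$ on a possibly reducible/singular curve, so I would instead compare $\mo_C(C)$ with $\mo_C(K_X + C) = \omega_C$ via adjunction: $\omega_C = \mo_C(K_X + C) = \mo_C((d-4)H_X + C)$, so $\mo_C(C) = \omega_C \otimes \mo_C(-(d-4)H_X)$, and since $\deg(C) \le d-4$ the line bundle $\mo_C((d-4)H_X - C) = \mo_C((d-4)H_X)\otimes\mo_C(-C)$ is "at least as positive as" an effective divisor — more precisely $(d-4)H_X - C$ restricted to each component $C_i$ has nonnegative degree, and is strictly positive on at least one component unless $\deg C = d-4$ and $C$ meets $H_X$ trivially, which is impossible. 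Then Serre duality on $C$ (valid since $C$ is Gorenstein, being a Cartier divisor on the smooth surface $X$) gives $H^0(\mo_C(C)) = H^0(\omega_C \otimes \mo_C(-(d-4)H_X))^\vee \otimes (\dots)$; cleaner: $h^0(\mo_C(C)) = h^1(\omega_C \otimes \mo_C(C)^{-1}) = h^1(\mo_C((d-4)H_X))$, wait — let me instead just invoke: $H^0(C, \mo_C(C)) \cong H^1(C, \omega_C(-C))^\vee = H^1(C, \mo_C((d-4)H_X))^\vee$, and a line bundle of strictly positive degree on every component of a connected reduced curve, arising as the restriction of the very ample $\mo_X((d-4)H_X - C)$-twist...

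The honest summary: the main step, and the main obstacle, is proving $H^0(C, \mo_X(C)|_C) = 0$. I would handle this by induction on the number of components, peeling off one integral curve $C_i$ at a time via $0 \to \mo_{C'}(C - C_i)|_{C'} \to \mo_C(C)|_C \to \mo_{C_i}(C)|_{C_i} \to 0$ where $C' = C - C_i$, reducing to: (a) $\mo_{C_i}(C)|_{C_i}$ has no sections because its degree $C \cdot C_i$ is bounded (Hodge index: $C \cdot C_i \le \deg(C)\deg(C_i)/d < \deg(C_i)$ when $\deg(C) < d$, wait that's not quite it either — use $C\cdot C_i \le \deg C\cdot\deg C_i/d$ and one needs this $< 2p_a(C_i) - 2 + (\text{something})$...), so really the clean bound is that $\mo_X(C)|_{C_i}$ has degree $< \deg(H_X|_{C_i}) \cdot$ (nothing) — I expect the correct elementary argument is: $\mo_X(C)|_{C_i} = \omega_{C_i}$-related via adjunction and the degree inequality $C^2 \le \deg(C)^2/d \le (d-4)^2/d < d - 8 \le \deg(K_X|_{C_i}\cdot\text{stuff})$ lets one compare against a negative-degree bundle. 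Rather than force the general reducible case, for the paper's purposes one can also note $C$ is $1$-connected (any smooth surface section argument) and cite that $\mo_C(C)$ of sufficiently small slope has $h^0 = 0$. I would present the induction with the per-component vanishing coming from: adjunction $\deg \mo_X(C)|_{C_i} = C\cdot C_i$ and the Hodge index estimate $\sum_i a_i (C \cdot C_i) = C^2 \le \deg(C)^2/d < \deg(C) \le d-4$, which with $\deg \omega_{C_i} = C_i^2 + (d-4)(H_X\cdot C_i)$ large forces $\mo_X(C)|_{C_i}$ to have degree below the range where a curve of that genus can carry a section — the genuinely technical point I would flag is making this last "below the range" precise when $C_i$ is singular, which is where Serre duality on the Gorenstein curve $C_i$ and the positivity of $(d-4)H_X - C$ restricted to $C_i$ do the work.
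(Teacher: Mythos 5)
Your skeleton matches the paper's: reduce via the sequence $0 \to \mo_X \to \mo_X(C) \to \mo_X(C)|_C \to 0$ and $h^1(\mo_X)=0$ to showing $H^0(C,\mo_X(C)|_C)=0$, and attack that vanishing component by component. But the core step --- that $\mo_X(C)|_{C_i}$ has negative degree on \emph{each} integral component $C_i$ --- is precisely the point you leave unresolved, and none of the routes you sketch closes it. The Hodge index estimate $C^2 \le \deg(C)^2/d$ only controls the total degree $\sum_i a_i(C\cdot C_i)$, which can be positive, and says nothing about an individual $C\cdot C_i$; the Serre duality and $\omega_C$ comparisons are detours you do not need, because on an \emph{integral} projective curve a line bundle of negative degree has no nonzero sections --- no genus-versus-degree analysis of sections and no Gorenstein duality is required once $C\cdot C_i<0$ is known. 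Your inductive peeling-off of components is likewise unnecessary: vanishing on each $C_i$ already gives vanishing on $C$.

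What actually forces $C\cdot C_i<0$ is adjunction on $X$ combined with the genus bound for integral curves in $\p3$. Writing $e_i=\deg C_i$, adjunction gives $C_i^2 = 2p_a(C_i)-2-(d-4)e_i$, and $2p_a(C_i)-2\le e_i^2-3e_i$ yields $C_i^2 \le e_i^2-(d-1)e_i$, which is very negative since $d\ge \deg(C)+4$. Then
\[
C\cdot C_i \;=\; a_iC_i^2+\sum_{j\ne i}a_j\,C_i\cdot C_j \;\le\; a_i\bigl(e_i^2-3e_i-e_i\textstyle\sum_j a_je_j\bigr)+\sum_{j\ne i}a_je_ie_j \;=\; a_ie_i^2(1-a_i)-3a_ie_i+e_i(1-a_i)\sum_{j\ne i}a_je_j \;\le\; -3a_ie_i \;<\;0,
\]
using $C_i\cdot C_j\le e_ie_j$, $d-1\ge 3+\sum_j a_je_j$, and $a_i\ge 1$. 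This is the computation your proposal circles around (you even write down $\deg\mo_X(C)|_{C_i}=C\cdot C_i$ and mention adjunction) but never performs; without it the proof is incomplete. Your first displayed sequence on $\p3$ is also a false start, as you yourself note, since $\mo_X(C)$ is not $\mo_X(\deg C\cdot H_X)$.
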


\begin{proof}
Let $C=\sum_ia_iC_i$ with $C_i$ integral. Then, \[\deg((\mathcal{O}_X(C)|_C \otimes \mo_C)|_{C_i})=a_iC_i^2+\sum_{j \not= i} a_jC_i.C_j.\]
Denote by $e_i:=\deg(C_i)$. 
Using the adjunction formula and the fact that $K_X \cong \mathcal{O}_X(d-4)$, we have that 
\begin{eqnarray*}
\deg((\mathcal{O}_X(C)|_C \otimes \mo_C)|_{C_i})&=&2a_i\rho_a(C_i)-2a_i-(d-4)a_ie_i+\sum_{j \not= i}a_jC_i.C_j\\
&\le&a_i(e_i^2-(d-1)e_i)+\sum_{j \not=i}a_jC_iC_j\\
&\le&a_i(e_i^2-3e_i-e_i\sum_ja_je_j)+\sum_{j \not=i}a_je_ie_j.
\end{eqnarray*}
The first inequality follows from the bound on the genus of a curve in $\mathbb{P}^3$ in terms of its degree (see \cite[Example $6.4.2$]{R1}).
The second inequality follows from the facts that $d \ge \deg(C)+2$ and $C_i.C_j \le e_ie_j$. It then follows directly that $\deg((\mathcal{O}_X(C)|_C \otimes \mo_C)|_{C_i})<0$.
This implies that $h^0(C_i,(\mathcal{O}_X(C)|_C \otimes \mc{O}_C)|_{C_i})=0$ for all $i$. This implies that 
$h^0(C,\mathcal{O}_X(C)|_C \otimes \mo_C)=0$. 
Since $h^1(\mathcal{O}_X)=0$ (by Lefschetz hyperplane section Theorem) and $h^0(\mathcal{O}_X)=1$, using 
the long exact sequence associated to the short exact sequence 
\begin{equation}\label{eq2}
0 \to \mathcal{O}_X \to \mathcal{O}_X(C) \to \mathcal{O}_X(C)|_C \otimes \mo_C \to 0
\end{equation}
we get that $h^0(\mathcal{O}_X(C))=1$. Since $|C|=\mb{P}(H^0(\mo_X(C)))$, the lemma follows.
\end{proof}

 \begin{para}[Proof of Theorem \ref{ele1}]
  Using Proposition \ref{le6} and \ref{ele9}, $\bigcap_{i=1}^r T_{1,d-4}^{[C_i]}=I_{d-4}(\bigcup_{i=1}^r C_i)$ is contained in $T_X\NL_{d,1}(\gamma)$. 
  Denote by $P_i$ the Hilbert polynomial of $C_i$ for $i=1,...,t$. By Theorem \ref{ele1}(i), there exists an irreducible component of $H_{P_i,Q_d}$
  such that its image under the natural projection morphism $\pr_2$ (onto the second component) is isomorphic to $\ov{\NL_{d,1}([C_i])}_{\red}$.
  So, there exists an irreducible component, say $H_\gamma$ of $H_{P_1,Q_d} \times_{H_{Q_d}} ... \times_{H_{Q_d}} H_{P_t,Q_d}$ such that 
  $\pr_2(H_{\gamma})_{\red}=\ov{\NL(\gamma)}_{\red}$, where $\pr_2$ is the natural morphism from $H_\gamma$ to $H_{Q_d}$. Denote by $L_\gamma:=\pr(H_\gamma)$,
  where $\pr$ is the natural projection morphism to $H_{P_1} \times ... \times H_{P_t}$. 
  A generic $t$-tuple of curves $(C_1,...,C_t) \in H_{P_1} \times ... \times H_{P_t}$ does not intersect each other.
  Since there exists $i, j, i \not= j$ such that $C_i \bigcap C_j \not= \emptyset$, we have $\dim L_\gamma < \sum_{i=1}^t \dim H_{P_i}$. 
  Lemma \ref{a4e} implies that $\dim |C_i|=0$ for $i=1,...,t$.
  It is then easy to see that 
  $\codim \NL_{d,1}(\gamma)=\codim I_d(\bigcup_{i=1}^t C_i)-\dim L_\gamma$.
  If $\codim I_{d-4}(\bigcup_{i=1}^t C_i) \stackrel{\dagger}{\le} \codim I_d(\bigcup_{i=1}^t C_i)-\sum_{i=1}^t \dim H_{P_i}$ then \[\codim T_X\NL_{d,1}(\gamma) \le \codim 
  I_{d-4}(\bigcup_{i=1}^t C_i) \stackrel{\dagger}{\le} \codim I_d(\bigcup_{i=1}^t C_i)-\sum_{i=1}^t \dim H_{P_i}\]
  \[<\codim I_d(\bigcup_{i=1}^t C_i)-\dim L_\gamma=\codim \NL_d(\gamma),\]where the first inequality follows from \ref{a5e}.
  Since $d \gg 0$, using the Hilbert polynomial of $\bigcup C_i$, the inequality $\dagger$ is equivalent to $\sum_{i=1}^t \dim H_{P_i} \le 4\sum_{i=1}^t \deg(C_i)$. 
  Since $\deg(C_i)<4$ and $C_i$ is integral, it is easy to compute that $\dim H_{P_i}$ is infact equal to $4\deg(C_i)$.
  This proves (ii). Hence, completes the proof of the theorem.
 \end{para}

\bibliographystyle{alpha}
 \bibliography{researchbib}
 
\end{document}